\newtheorem{theorem}{Theorem}
\newtheorem{proposition}[theorem]{Proposition}
\newtheorem{corollary}[theorem]{Corollary}
\newtheorem*{remark*}{Remark}
\newcommand{\C}{\Bbb C}
\newcommand{\D}{\Bbb D}
\newcommand{\G}{\Bbb G}
\newcommand{\N}{\Bbb N}
\newcommand{\Om}{\Omega}
\def\phi{\varphi}
\def\O{\mathcal O}
\def\M{\mathcal M}
\def\diag{\operatorname{diag}}
\def\ord{\operatorname{ord}}
\def\sp{\operatorname{sp}}
\def\tr{\operatorname{tr}}
\title[Spectral Nevanlinna-Pick and Carath\'eodory-Fej\'er problems]
{Spectral Nevanlinna-Pick and Carath\'eodory-Fej\'er problems for
$n\le 3$}
\begin{document}

\author{Nikolai Nikolov}
\address{Institute of Mathematics and Informatics\\ Bulgarian Academy of
Sciences\\1113 Sofia, Bulgaria} \email{nik@math.bas.bg }

\author{Peter Pflug}
\address{Carl von Ossietzky Universit\"at Oldenburg, Institut f\"ur Mathematik,
Postfach 2503, D-26111 Oldenburg, Germany}
\email{peter.pflug@uni-oldenburg.de}

\author{Pascal J. Thomas}
\address{Universit\'e de Toulouse\\ UPS, INSA, UT1, UTM \\
Institut de Math\'e\-ma\-tiques de Toulouse\\
F-31062 Toulouse, France} \email{pthomas@math.univ-toulouse.fr}

\thanks{This note was written during the stay of the first named author at the
University of
Oldenburg supported by a DAAD grant (November 2009 - January
2010).}

\subjclass[2000]{30E05, 32F45}

\keywords{spectral Nevanlinna-Pick and Carath\'eodory-Fej\'er
prob\-lems}

\begin{abstract} The Nevanlinna-Pick problem and the simplest case of the
Cara\-th\'eodory-Fej\'er problem on the spectral ball $\Om_3$ are
reduced to interpolation problems on the symmetrized three-disc
$\G_3.$
\end{abstract}

\maketitle

Let $\M_n$ be the set of all $n\times n$ complex matrices. For
$A\in\M_n$ denote by $\sp(A)$ and $r(A)=\max_{\lambda\in
\sp(A)}|\lambda|$ the spectrum and the spectral radius of $A$,
respectively. The \textit{spectral ball} $\Om_n$ is given as
$$
\Om_n:=\{A\in\M_n:r(A)<1\}.
$$

\section{The spectral Nevanlinna-Pick problem}

In this note we first study the \textit{spectral Nevanlinna-Pick
problem} (for short we will write (SNPP)) on $\Om_n$ for $n=2,3:$
\smallskip

\noindent\textit{Given $k$ points $\alpha_1,\dots,\alpha_k$ in the
open unit disc $\D\subset\C$ and $k$ matrices
$A_1,\dots,A_k\in\Om_n$, decide whether there exists a holomorphic
mapping $\phi\in\O(\D,\Om_n)$ with $\phi(\alpha_j)=A_j$, $1\leq
j\leq k$}.
\smallskip

\noindent This problem has been studied by many authors; we refer
to \cite{Agl-You1,Agl-You2,Ber,BFT,Cos} and the references there. We
should mention that whenever there is a solution for
$\alpha_j,A_j$, then there is one for $\alpha_j,\tilde A_j$, when
$A_j$ is similar to $\tilde A_j$ (see \cite{BFT}); a fact that will be strongly
used in the proofs. Recall that (SNPP) for $k$ matrices on
$\Omega_n$ is completely understood if $k=n=2$ or if all matrices
$A_j$ have singleton spectra (cf.~\cite{Ber}).

In this note we give a complete reduction of this problem to an
interpolation problem on the so-called \textit{symmetrized
polydisc} $\G_n$ for $n=2,$ respectively $n=3.$ Recall that $\G_n$
is defined by
$$\G_n:=\{\sigma(A):A\in\Om_n\},$$
where the mapping $\sigma=(\sigma_1,\dots,\sigma_n)$ is given
by the following formula:
$$
\det(tE-A)=\sum_{j=0}^n(-1)^j\sigma_j(A)t^{n-j}
$$
($\sigma_0(A)=1).$ This kind of reduction leads to a problem which
may be simpler because $\G_n$ is a bounded hyperconvex (in
particular, taut) domain of dimension $n$ (much less than the
dimension $n^2$ of $\Om_n$).

If we have a solution $\psi\in\O(\D,\Om_n)$ of (SNPP) with the
above data $\alpha_j,A_j$, i.e. $\psi(\alpha_j)=A_j$ for all $j$,
then $\phi:=\sigma\circ \psi\in\O(\D,\G_n)$ with
$\phi(\alpha_j)=\sigma(A_j)$ for all $j$. Note that then the
$\phi_j$ may satisfy additional relations.

We are interested to find exactly those conditions (necessary and
sufficient) for a $\phi\in\O(\D,\G_n)$  with
$\phi(\alpha_j)=\sigma(A_j)$ for $j=1,\dots,k$, such that there
exists a $\psi\in\O(\D,\Om_n)$ satisfying $\phi=\sigma\circ\psi$
and $\psi(\alpha_j)=A_j$ for $j=1,\dots,k$; this will imply a
reduction of (SNPP) on $\Om_n\subset\C^{n^2}$ to an interpolation
problem on $\G_n\subset\C^n$. In this note we are mainly
interested in the cases $n=2$ and $n=3$.

There are no conditions if all $A_j's$ are cyclic (even in
$\Om_n,$ cf.~\cite{Cos}). Recall that a matrix $A\in\M_n$ is said
to be \textit{cyclic} (or \textit{non-derogatory}) if it admits a
cyclic vector (for other equivalent properties see \cite{NTZ}).

The case $k=2$ and $A_1$ cyclic has been
studied in \cite{Tho-Tra}.

Now we formulate the complete reduction in the case $n=2$ (see
also \cite{Agl-You1,Agl-You2,Ber}). Note that $A\in\M_2$ is either
cyclic or scalar (i.e.~$A=\lambda I,$ where $\lambda\in\C$ and $I$
is the unit matrix.)

\begin{proposition}\label{1} Let $A_1=\lambda_1I,\dots,A_k=\lambda_kI\in\Om_2,$
let $A_{k+1},\dots,A_l\in\Om_2$ be non-scalar matrices, and let
$\phi\in\O(\D,\G_2)$  be such that $\phi(\alpha_j)=\sigma(A_j)$
for $j=1,\dots,l.$ Then there exists a $\psi\in\O(\D,\Om_2)$
satisfying $\phi=\sigma\circ\psi$ and $\psi(\alpha_j)=A_j$ for
$j=1,\dots,l$ if and only if
$\phi'_2(\alpha_j)=\lambda_j\phi'_1(\alpha_j)$ for $j=1,\dots,k.$
\end{proposition}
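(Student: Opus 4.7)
My plan is to handle necessity by a Taylor expansion at each scalar node and sufficiency by an explicit parametrization of candidate lifts via their off-diagonal entries.

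For necessity, if $\psi\in\O(\D,\Om_2)$ is a solution then near a scalar node $\alpha_j$ (with $j\leq k$) I would write $\psi(z)=\lambda_j I+(z-\alpha_j)B_j+O((z-\alpha_j)^2)$. Expanding $\phi_1=\tr\psi$ and $\phi_2=\det\psi$ to first order and using
$$
\det(\lambda_j I+(z-\alpha_j)B_j)=\lambda_j^2+(z-\alpha_j)\lambda_j\tr(B_j)+O((z-\alpha_j)^2),
$$
one reads off $\phi_1'(\alpha_j)=\tr(B_j)$ and $\phi_2'(\alpha_j)=\lambda_j\tr(B_j)$, whence the stated compatibility.

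For sufficiency, I would parametrize any candidate lift with trace $\phi_1$ in the form
$$
\psi=\begin{pmatrix}\phi_1/2+f & g\\ h & \phi_1/2-f\end{pmatrix},
$$
so that the condition $\det\psi=\phi_2$ becomes $f^2+gh=D$, where $D:=\phi_1^2/4-\phi_2$. A direct computation gives $D(\alpha_j)=0$ and $D'(\alpha_j)=\lambda_j\phi_1'(\alpha_j)-\phi_2'(\alpha_j)$ for $j\leq k$, so the hypothesis is exactly the statement that $D$ has a zero of order at least $2$ at each scalar node. Setting $B(z):=\prod_{j=1}^k(z-\alpha_j)$, the quotient $D/B$ is then holomorphic on $\D$, and I would take the explicit lift $f:=0$, $g:=B$, $h:=D/B$. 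By construction $\sigma\circ\psi=\phi$, so the spectrum lies in $\D$ and $\psi\in\O(\D,\Om_2)$; moreover $\psi(\alpha_j)=(\phi_1(\alpha_j)/2)I=\lambda_j I$ for $j\leq k$, while for $j>k$ the value $\psi(\alpha_j)$ is non-scalar (since $g(\alpha_j)=B(\alpha_j)\neq 0$), hence cyclic, and it shares its characteristic polynomial with $A_j$, so it is similar to $A_j$.

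To finish, I would invoke the similarity invariance of (SNPP) recalled in the introduction to pass from this $\psi$ to a conjugate agreeing on the nose with $A_j$ for $j>k$; such a conjugation preserves both the scalar values $\lambda_j I$ and the composition $\sigma\circ\psi=\phi$. The only mildly non-obvious step is the identification $\phi_2'(\alpha_j)=\lambda_j\phi_1'(\alpha_j)\iff\ord_{\alpha_j}D\geq 2$, which collapses what looks like a delicate system $f^2+gh=D$ with simultaneous vanishing of $f,g,h$ at the scalar nodes into the one-line explicit choice above.
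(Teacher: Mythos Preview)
Your proof is correct and follows the same blueprint as the paper's: show necessity by differentiating $\phi=\sigma\circ\psi$, then for sufficiency build an explicit $2\times 2$ lift whose off-diagonal entry has simple zeros exactly at the scalar nodes, use the derivative hypothesis to see that the relevant numerator vanishes to order $\ge 2$ there so the division defining the remaining entry is legal, and finally conjugate by a holomorphic family $e^{f}$ to hit the prescribed non-scalar $A_j$.

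The only difference is cosmetic: the paper takes the diagonal to be $(P,\varphi_1-P)$ with interpolation polynomials $P,Q$ (so that the lift lands directly on companion matrices at the non-scalar nodes), whereas you take the symmetric diagonal $(\varphi_1/2,\varphi_1/2)$, which makes the key function the discriminant $D=\varphi_1^2/4-\varphi_2$ and the divisor simply $B=\prod_{j\le k}(z-\alpha_j)$. Your choice is a bit cleaner; the paper's has the minor bonus that $\tilde\psi$ is visibly bounded (polynomials and $\varphi$), which they note separately. One small point: the ``similarity invariance of (SNPP)'' as stated in the introduction does not by itself guarantee that the new solution still satisfies $\sigma\circ\psi=\varphi$; you are really using the stronger fact (which the paper makes explicit) that the passage is by a global holomorphic conjugation $e^{-f}\psi e^{f}$, which of course preserves characteristic polynomials pointwise.
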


\begin{corollary}\label{2} Let $A_1,\dots,A_l$ be as above and let
$\alpha_1,\dots,\alpha_l\in\D.$
Then there is a $\psi\in\O(\D,\Om_2)$ such that
$\psi(\alpha_j)=A_j$ for $j=1,\dots,l$ if and only if there is a
$\phi\in\O(\D,\G_2)$ such that $\phi(\alpha_j)=\sigma(A_j)$ for
$j=1,\dots,l,$ and $\phi'_2(\alpha_j)=\lambda_j\phi'_1(\alpha_j)$
for $j=1,\dots,k.$
\end{corollary}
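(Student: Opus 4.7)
The plan is to deduce Corollary \ref{2} directly from Proposition \ref{1}. Proposition \ref{1} already characterizes, for a \emph{prescribed} $\phi\in\O(\D,\G_2)$ passing through the points $\sigma(A_j)$, exactly when a lift $\psi\in\O(\D,\Om_2)$ with $\sigma\circ\psi=\phi$ and $\psi(\alpha_j)=A_j$ exists. Corollary \ref{2} is the version obtained by existentially quantifying over such a $\phi$, so the two statements are essentially equivalent; the proof is a matter of unpacking this.

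For the ``only if'' direction, I would start from a given $\psi\in\O(\D,\Om_2)$ with $\psi(\alpha_j)=A_j$ and define $\phi:=\sigma\circ\psi\in\O(\D,\G_2)$. Then $\phi(\alpha_j)=\sigma(\psi(\alpha_j))=\sigma(A_j)$ holds automatically. The derivative identities at the scalar interpolation points then follow from the necessity half of Proposition \ref{1}; equivalently, they can be verified by hand, writing $\psi(z)=\lambda_j I+(z-\alpha_j)B_j+O((z-\alpha_j)^2)$ and using $\det(\lambda I+\eps B)=\lambda^2+\eps\lambda\tr B+\eps^2\det B$, which reads off $\phi_1'(\alpha_j)=\tr B_j$ and $\phi_2'(\alpha_j)=\lambda_j\tr B_j$, so $\phi_2'(\alpha_j)=\lambda_j\phi_1'(\alpha_j)$.

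For the ``if'' direction I would simply feed the given $\phi$, which by hypothesis satisfies both the interpolation values and the derivative conditions, into Proposition \ref{1} and extract the desired $\psi$.

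The main obstacle is \emph{not} in Corollary \ref{2} itself but in Proposition \ref{1}, where all the analytic content is concentrated: producing, from the prescribed values and the first-order compatibility at scalar points, a globally defined holomorphic $\psi:\D\to\Om_2$ whose characteristic polynomial realizes $\phi$ and whose values at the nodes are the given (possibly scalar) matrices $A_j$. Once Proposition \ref{1} is in hand, Corollary \ref{2} is a cosmetic reformulation.
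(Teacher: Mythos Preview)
Your proposal is correct and matches the paper's treatment: the paper states Corollary~\ref{2} immediately after Proposition~\ref{1} without a separate proof, precisely because it is obtained by existentially quantifying over $\phi$ in Proposition~\ref{1}, exactly as you describe. Your hand-verification of the derivative identity in the ``only if'' direction is a harmless elaboration of what the paper leaves implicit.
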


\begin{proof}[Proof of Proposition \ref{1}] The necessary part is clear
(setting $\phi=\sigma\circ\psi$). For the converse, note that $A_j$
is similar to its companion matrix
$\tilde
A_j=\left(\begin{array}{cc}0&1\\-\phi_2(\alpha_j)&\phi_1(\alpha_j)\end{array}\right)$
for $j=k+1,\dots,l.$ Set $\tilde A_j=A_j$ for $j=1,\dots,k.$
Let $P$ and $Q$ be polynomials with simple zeros such that
$$
P(\alpha_j)=\left\{\begin{array}{ll}
\lambda_j&j\le k\\
0&j>k\end{array}\right.,\quad Q(\alpha_j)=\left\{\begin{array}{ll}
0&j\le k\\
1&j>k\end{array}\right.$$ and $Q$ has no more zeros. Then the
conditions $\phi'_2(\alpha_j)=\lambda_j\phi'_1(\alpha_j)$ for
$j=1,\dots,k$ imply that the mapping
$$\tilde\psi=\left(\begin{array}{ccc}P&Q\\R&\varphi_1-P\end{array}\right),$$
where $R=\frac{P\varphi_1-P^2-\varphi_2}{Q},$ does the job for $\tilde A_j$ instead
of $A_j.$
It remains to set $\psi=e^{-f}\tilde\psi e^f,$ where $f:\C\to\M_2$ is a polynomial mapping
such that
$A_j=e^{-f(\alpha_j)}\tilde A_je^{f(\alpha_j)}.$
\end{proof}

\begin{remark*} {\rm Note that $\psi$ is bounded (and
$\psi(\zeta)$ is  cyclic for
$\zeta\in\D\setminus\{\alpha_1,\dots,\alpha_k\}$). Therefore,
Proposition \ref{1} also says that if there is a solution of
(SNPP), then there exists also a bounded one.}
\end{remark*}

Note that if $n\ge 3,$ then there exist non-cyclic but
non-scalar matrices in $\Om_n$. For $n=3$ the complete reduction
of (SNPP) is given in the following result.

\begin{proposition}\label{3} Let $A_1=\lambda_1 I,\dots,A_k=\lambda_k I\in\Om_3,$ let
$A_{k+1},\dots,A_l\in\Om_3$ be non-cyclic and non-scalar matrices
such that $\sp(A_j)=\{\lambda_j,\lambda_j,\mu_j\}$, $k+1\leq j\leq
l$. Moreover, let $A_{l+1},\dots,A_m\in\Om_3$ be cyclic matrices.
Let $\phi\in\O(\D,\G_3)$  be such that
$\phi(\alpha_j)=\sigma(A_j)$ for $j=1,\dots,m.$ Then there exists
a $\psi\in\O(\D,\Om_3)$ satisfying $\phi=\sigma\circ\psi$ and
$\psi(\alpha_j)=A_j$ for $j=1,\dots,m$ if and only if the
following conditions hold:

\noindent$\bullet$
$\phi'_2(\alpha_j)=2\lambda_j\phi'_1(\alpha_j),$
$\phi'_3(\alpha_j)=\lambda_j^2\phi_1'(\alpha_j),$ and
$\phi''_3(\alpha_j)-\lambda_j\phi''_2(\alpha_j)+\lambda_j^2\phi''_1(\alpha_j)=0$
for $j=1,\dots,k;$

\noindent$\bullet$
$\phi'_3(\alpha_j)-\lambda_j\phi'_2(\alpha_j)+\lambda_j^2\phi'_1(\alpha_j)=0$
for $j=k+1,\dots,l.$
\end{proposition}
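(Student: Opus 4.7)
The plan is to mirror the proof of Proposition \ref{1}: for necessity, differentiate $\phi=\sigma\circ\psi$ at each node; for sufficiency, replace each $A_j$ by a similar canonical matrix, build a holomorphic $\tilde\psi$ whose $\sigma$-image equals $\phi$ identically and whose value at each $\alpha_j$ is similar to $A_j$, then conjugate by a matrix exponential to fix the values exactly.

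For the necessity part, I would Taylor-expand $\psi(\alpha_j+\eps)=A_j+\eps B+\eps^2C+O(\eps^3)$, use the similarity invariance of $\sigma$ to reduce $A_j$ to $\lambda_jI$ or to $\diag(\lambda_j,\lambda_j,\mu_j)$, and plug into the identities $\sigma_2(\lambda I+M)=3\lambda^2+2\lambda\tr M+\sigma_2(M)$ and $\sigma_3(\lambda I+M)=\lambda^3+\lambda^2\tr M+\lambda\sigma_2(M)+\det M$ at scalar nodes, or into the first-order derivatives $\sigma_2'(A)[B]=\tr A\cdot\tr B-\tr(AB)$ and $\sigma_3'(A)[B]=\tr(\operatorname{adj}(A)B)$ at non-cyclic non-scalar nodes. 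An elementary algebraic simplification then yields the four identities displayed in the proposition, all contributions from the off-diagonal parts of $B,C$ cancelling out.

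For sufficiency the scheme is (i)~choose a similar canonical $\tilde A_j$ for each $j$ (scalar stays scalar; $\tilde A_j=\diag(\lambda_j,\lambda_j,\mu_j)$ in the non-cyclic non-scalar case; companion matrix in the cyclic case); (ii)~construct $\tilde\psi\in\O(\D,\M_3)$ with $\sigma\circ\tilde\psi=\phi$ and $\tilde\psi(\alpha_j)$ similar to $A_j$; (iii)~observe $\tilde\psi(\D)\subset\Om_3$ because $\sigma(\tilde\psi(\zeta))=\phi(\zeta)\in\G_3$; (iv)~set $\psi=e^{-f}\tilde\psi e^f$ for a polynomial $f:\C\to\M_3$ whose values at the nodes are logarithms of the matrices implementing the similarities $\tilde\psi(\alpha_j)\to A_j$, existing by surjectivity of $\exp:\M_3\to GL_3$ together with Lagrange interpolation. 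Steps (i), (iii), (iv) run exactly as in the $n=2$ case of Proposition \ref{1}; the heart of the argument is (ii), for which I propose the ansatz
\[
\tilde\psi=\begin{pmatrix}P&Q_1&0\\0&P_2&Q_2\\R&S&\phi_1-P-P_2\end{pmatrix},\quad S=\frac{PP_2+(P+P_2)(\phi_1-P-P_2)-\phi_2}{Q_2},\quad R=\frac{-\chi_\phi(P)}{Q_1Q_2},
\]
where $\chi_\phi(t):=t^3-\phi_1t^2+\phi_2t-\phi_3$, $P$ and $P_2$ are polynomials taking the value $\lambda_j$ at $\alpha_j$ for $j\le l$ (and, say, $0$ at the cyclic nodes), and $Q_1,Q_2$ are polynomials with simple zeros precisely at $\alpha_1,\dots,\alpha_l$. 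Direct computation then shows that $\tr\tilde\psi=\phi_1$, $\sigma_2(\tilde\psi)=\phi_2$, and $\sigma_3(\tilde\psi)=\phi_3$ hold identically wherever $R,S$ are defined.

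The main obstacle is to check that $R$ and $S$ extend holomorphically across $\alpha_1,\dots,\alpha_l$ and that the resulting $\tilde\psi(\alpha_j)$ lies in the correct similarity class. Taylor-expanding the numerators at each $\alpha_j$, $j\le l$: at a scalar node the first derivative of the $S$-numerator equals $2\lambda_j\phi_1'(\alpha_j)-\phi_2'(\alpha_j)$ independently of $P'(\alpha_j),P_2'(\alpha_j)$, so the first scalar identity gives $S(\alpha_j)=0$; and the first two non-trivial derivatives of $\chi_\phi(P)$ at $\alpha_j$ reduce, after using the first identity to cancel the $P'$-dependent contributions, to $\phi_3'(\alpha_j)-\lambda_j\phi_2'(\alpha_j)+\lambda_j^2\phi_1'(\alpha_j)$ and $\phi_3''(\alpha_j)-\lambda_j\phi_2''(\alpha_j)+\lambda_j^2\phi_1''(\alpha_j)$, so the second and third scalar identities make these vanish and force $R(\alpha_j)=0$, yielding $\tilde\psi(\alpha_j)=\lambda_jI=A_j$. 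At a non-cyclic non-scalar $\alpha_j$, only $\chi_\phi(P)'(\alpha_j)=\phi_3'(\alpha_j)-\lambda_j\phi_2'(\alpha_j)+\lambda_j^2\phi_1'(\alpha_j)=0$ is needed to keep $R$ holomorphic, and the resulting $\tilde\psi(\alpha_j)$ is lower-triangular with eigenvalue $\lambda_j$ of geometric multiplicity two, hence diagonalisable and similar to $\diag(\lambda_j,\lambda_j,\mu_j)\sim A_j$. At a cyclic $\alpha_j$, $Q_1(\alpha_j)Q_2(\alpha_j)\ne0$ makes $\tilde\psi(\alpha_j)$ cyclic, and matching spectra forces similarity to $A_j$, with no extra condition on $\phi$ needed.
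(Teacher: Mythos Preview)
Your scheme is essentially the paper's: the same upper-Hessenberg ansatz with $(1,3)$ and $(2,1)$ entries zero, the diagonal entries interpolating the eigenvalues, and the two remaining entries $S=f_{32}$ and $R=f_{31}$ recovered as quotients whose holomorphy is forced by the derivative hypotheses. The algebra $\sigma_2(\tilde\psi)=\phi_2$, $\sigma_3(\tilde\psi)=\phi_3$ and your order-of-vanishing computations for $\chi_\phi(P)$ are correct and coincide with the paper's computation of $\ord_{\alpha_j}\tilde g$ and $\ord_{\alpha_j}\tilde h$.

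The one substantive difference is your choice of target at the non-cyclic non-scalar nodes. You aim for a lower-triangular matrix (both $Q_1$ and $Q_2$ vanish at $\alpha_j$), whereas the paper uses the rational canonical form $\lambda_j\oplus\bigl(\begin{smallmatrix}0&1\\-\lambda_j\mu_j&\lambda_j+\mu_j\end{smallmatrix}\bigr)$, so that $f_{23}(\alpha_j)=1$. This matters precisely when $\lambda_j=\mu_j$: then $A_j$ is \emph{not} diagonalisable, and your $\tilde\psi(\alpha_j)=\lambda_jI+\bigl(\begin{smallmatrix}0&0&0\\0&0&0\\R(\alpha_j)&S(\alpha_j)&0\end{smallmatrix}\bigr)$ is similar to $A_j$ only if $(R(\alpha_j),S(\alpha_j))\neq(0,0)$. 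But with $\mu_j=\lambda_j$ the derivative of the $S$-numerator at $\alpha_j$ equals $2\lambda_j\phi_1'(\alpha_j)-\phi_2'(\alpha_j)$, independently of $P',P_2'$; and if this happens to vanish, then the $P'$-term in $(\chi_\phi(P))''(\alpha_j)$ vanishes as well, so $R(\alpha_j)$ is also out of your control. For such $\phi$ your construction yields $\tilde\psi(\alpha_j)=\lambda_jI$, the wrong similarity class. The paper's choice $f_{23}(\alpha_j)=1$ builds the non-scalarness in from the start and sidesteps the issue; you should either adopt that canonical form for $k+1\le j\le l$, or add a separate patch for the case $\lambda_j=\mu_j$.
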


As a simple consequence of Proposition \ref{3} one may formulate a
corollary similar to Corollary \ref{2}. More philosophically,
(SNPP) is solvable on $\Om_3$ if and only if a ``modified''
spectral Nevanlinna-Pick problem ``with derivatives" can be solved
on $\G_3$.

\begin{proof} The necessary part follows by straightforward calculations. For the
converse,
similarly to the previous proof we may replace any $A_j$ by its
rational canonical form, i.e.~we may assume that
$$A_j:=\left(\begin{array}{ccc}\lambda_j&0&0\\0&0&1\\0&-\lambda_j\mu_j&\lambda_j+\mu_j\end{array}\right),
\quad k+1\le j\le l,$$
$$A_j:=\left(\begin{array}{ccc}0&1&0\\0&0&1\\c_j&-b_j&a_j\end{array}\right),
\quad l+1\le j\le m,$$ where $a_j=\lambda_j+\mu_j+\nu_j,$
$c_j=\lambda_j\mu_j+\mu_j\nu_j+\nu_j\lambda_j$ and
$c_j=\lambda_j\mu_j\nu_j.$ We shall look for a $\psi$
of the form
$$\psi=\left(\begin{array}{ccc}f_{11}&f_{12}&0\\0&f_{22}&f_{23}\\f_{31}&f_{32}&f_{33}\end{array}\right),$$
where $f_{33}=\varphi_1-f_{11}-f_{22},$ and $f_{pq}$ are entire
function with $f_{pq}(\alpha_j)=a_{pq}^j,$ such that all their
zeros are simple ones and belong to $\{\alpha_1,\dots,\alpha_m\}.$
We have to satisfy the following conditions:
$$f_{32}(\alpha_j)=0,\ 1\le j\le k,\quad  f_{31}(\alpha_j)=0,\ 1\le j\le l,$$
$$f_{32}f_{23}=\tilde g:=f_{11}f_{22}+f_{22}f_{33}+f_{33}f_{11}-\varphi_2,$$
$$f_{31}f_{12}f_{23}=\tilde h:=\varphi_3+f_{11}(\tilde g-f_{22}f_{11}).$$
Note that $\phi'_2(\alpha_j)=2\lambda_j\phi'_1(\alpha_j)$ means
$\ord_{\alpha_j}\tilde g\ge 2$ for $1\leq j\leq k$ and thus
$f_{32}:=\tilde g/f_{23}$ is a well-defined function with the
desired properties. On the other hand, the conditions
$\phi'_2(\alpha_j)=2\lambda_j\phi'_1(\alpha_j)$ and
$\phi'_3(\alpha_j)=\lambda_j^2\phi'_1(\alpha_j)$ imply that
$\phi'_3(\alpha_j)-\lambda_j\phi'_2(\alpha_j)+\lambda_j^2\phi'_1(\alpha_j)=0$,
so that the last condition holds for $1\leq j\leq l$. This gives
that $\ord_{\alpha_j}\tilde h\ge 2$ for those $j$'s.  For
$j=1,\dots,k$ we have, in addition,
$\phi''_3(\alpha_j)-\lambda_j\phi''_2(\alpha_j)+\lambda_j^2\phi''_1(\alpha_j)=0$,
so $\ord_{\alpha_j}\tilde h\geq 3$. Therefore, $f_{31}:=\tilde
h/(f_{12}f_{23})$ is a well-defined function with the desired
properties.
\end{proof}

Note that as above this result says that if there is a solution of
(SNPP), there there exists also a bounded one.

It is easy to find necessary conditions for lifting on $\Om_n$
(depending of the structure of the associated rational canonical
forms of the matrices). It would be interesting to know which of
these conditions are in fact also sufficient.
\smallskip

\begin{remark*} {\rm Let $k=2$ and denote by $l_G$ the so-called
Lempert function of a domain $G\subset\C^m$ (cf.~\cite{NTZ}).
Assume that (SNPP) with data $(\alpha_1,A_1),$ $(\alpha_2,A_2)$ is
solvable. Then
$$
l_{\Om_3}(A_1,A_2)\leq l_\D(\alpha_1,\alpha_2)=\Big
|\frac{\alpha_1-\alpha_2}{1-\overline{\alpha_1}\alpha_2}\Big |.
$$
Conversely, if $l_{\Om_3}(A_1,A_2)\leq l_\D(\alpha_1,\alpha_2),$
then the above problem is solvable. Moreover, there exists always an extremal
analytic disc through
$A_1, A_2$. Indeed, we may assume that $\alpha_1=0$. Then there
are holomorphic discs $\psi_j\in\O(\D,\Om_3)$ with
$\psi_j(0)=A_1$, $\psi_j(\alpha_{2,j})=A_2$ and
$\alpha_{2,j}\rightarrow l_{\Om_3}(A_1,A_2)=:\tilde\alpha_2\le |\alpha_2|$. In other
words, (SNPP) with data $(0,A_1),$ $(\alpha_{2,j},A_2)$ is solvable. Put
$\phi_j=\sigma\circ\psi_j$. Since $\G_3$ is a taut domain, we may assume
that $\phi_j$ converges locally uniformly to a
$\phi\in\O(\D,\G_3)$. Note that $\phi$ satisfies the necessary and
sufficient conditions for $0,\tilde\alpha_2,A_1,A_2$ from Proposition
\ref{3}. Hence we can lift $\phi$ to a $\tilde\psi\in\O(\D,\Om_3)$ with
$\tilde\psi(0)=A_1$ and $\tilde\psi(\tilde\alpha_2)=A_2$. It remains to set
$\psi(\zeta)=\tilde\psi(\tilde\alpha_2\zeta/\alpha_2).$}
\end{remark*}

Using again that $\G_3$ is a taut domain, Proposition \ref{3} implies the
following conditional stability of (SNPP).

\begin{corollary}\label{4} Let (SNPP) with data
$(\alpha_{1,j},A_{1,j}),\dots,(\alpha_{k,j},A_{k,j})\subset\D\times\Om_3$,
$j\in\N$, be solvable, and let
$(\alpha_{s,j},A_{s,j})_j\to(\alpha_s,A_s)\subset\D\times\Om_3,$
$1\le s\le k.$ Assume that:

\noindent$\bullet$ if $A_s$ is non-cyclic, then $A_{s,j}$ is
non-cyclic;

\noindent$\bullet$ if $A_s$ is scalar, then $A_{s,j}$ is scalar.

\noindent Then the problem with data
$(\alpha_1,A_1),\dots,(\alpha_k,A_k)$ is solvable.
\end{corollary}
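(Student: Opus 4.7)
The plan is to push the sequence of solutions $\psi_j$ into the taut domain $\G_3$ via $\sigma$, extract a convergent limit $\phi$, verify for $\phi$ the necessary and sufficient conditions of Proposition~\ref{3}, and then lift $\phi$ back to $\Om_3$.

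By hypothesis each problem has a solution $\psi_j\in\O(\D,\Om_3)$; set $\phi_j:=\sigma\circ\psi_j\in\O(\D,\G_3)$, so that $\phi_j(\alpha_{s,j})=\sigma(A_{s,j})\to\sigma(A_s)\in\G_3$. Consequently the sequence $(\phi_j)$ meets, for large $j$, a fixed compact subset of $\G_3$ at points lying in a fixed compact subset of $\D$, so compact divergence is excluded. Since $\G_3$ is taut, a subsequence (still denoted $\phi_j$) converges locally uniformly in $\D$ to some $\phi\in\O(\D,\G_3)$. Cauchy estimates give locally uniform convergence of derivatives of every order, and combined with $\alpha_{s,j}\to\alpha_s$ this yields $\phi^{(\nu)}(\alpha_s)=\lim_j\phi_j^{(\nu)}(\alpha_{s,j})$ for every $\nu\ge 0$; in particular $\phi(\alpha_s)=\sigma(A_s)$.

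It remains to verify for $\phi$ the conditions listed in Proposition~\ref{3} at each $\alpha_s$. I split into cases on the type of $A_s$. If $A_s$ is cyclic, there is nothing to check. If $A_s=\lambda_s I$ is scalar, the second hypothesis of the corollary forces $A_{s,j}=\lambda_{s,j}I$ with $\lambda_{s,j}\to\lambda_s$; applying Proposition~\ref{3} to $\phi_j$ produces the three scalar identities at $\alpha_{s,j}$ for the parameter $\lambda_{s,j}$, and passing to the limit gives the corresponding identities for $\phi$ at $\alpha_s$ with parameter $\lambda_s$. If $A_s$ is non-cyclic and non-scalar with $\sp(A_s)=\{\lambda_s,\lambda_s,\mu_s\}$, the first hypothesis gives $A_{s,j}$ non-cyclic; since the scalar locus is closed in $\M_3$ and $A_s$ is not scalar, $A_{s,j}$ is non-scalar for all large $j$, so $\sp(A_{s,j})=\{\lambda_{s,j},\lambda_{s,j},\mu_{s,j}\}$. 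Continuity of the eigenvalues (viewed as a multiset) forces the doubled eigenvalue $\lambda_{s,j}$ to tend to $\lambda_s$, and the first-order identity given by Proposition~\ref{3} for $\phi_j$ passes to the required identity for $\phi$. Finally, I apply the sufficiency part of Proposition~\ref{3} to $\phi$, producing $\psi\in\O(\D,\Om_3)$ with $\sigma\circ\psi=\phi$ and $\psi(\alpha_s)=A_s$ for $1\le s\le k$.

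The main obstacle is the case analysis on the type of each $A_s$: one must ensure that the $j$-dependent classification of the $A_{s,j}$ (scalar, non-cyclic non-scalar, cyclic) is compatible in the limit with the classification of $A_s$, so that the conditions satisfied by $\phi_j$ survive to give exactly the conditions demanded for $\phi$. This is precisely where the two hypotheses of the corollary are used, in conjunction with the closedness of the scalar locus in $\M_3$ (which takes care of the only case not covered directly by hypothesis). The role of tautness of $\G_3$ is essential for obtaining the limit $\phi$ in $\G_3$; we do not need a comparable compactness statement on $\Om_3$, which is indeed not taut.
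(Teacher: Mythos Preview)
Your argument is correct and is precisely the one the paper intends: it merely says ``Using again that $\G_3$ is a taut domain, Proposition~\ref{3} implies the following conditional stability,'' and your proof spells out exactly this passage to the limit (as in the Remark preceding the corollary). Your handling of the case analysis, in particular the observation that $A_{s,j}$ is eventually non-scalar when $A_s$ is non-cyclic non-scalar, is the only detail one might add beyond what the paper makes explicit.
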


\begin{remark*} {\rm Both conditions are essential even for
$k=2.$ For the first one, note that if $n\ge 3$ and $A_1$ is
non-scalar with equal eigenvalues, then the so-called Lempert
function $l_{\Om_n}(A_1,\cdot)$ is not continuous at some
non-scalar and non-cyclic diagonal matrix $A_2$ (see
\cite{Nik-Tho2}). For the second one, we claim that if
$A_1\in\Omega_3$ is a cyclic matrix with at least two different
eigenvalues and $A_{2,j}\in\Omega_3$ are non-scalar matrices
tending to a scalar matrix $A_2,$ then
$l_{\Om_3}(A_1,A_2)>\limsup_{j\to\infty}l_{\Om_3}(A_1,A_{2,j})$
(the same holds for $n=2,$ cf.~\cite{Nik-Tho2}). Applying an
automorphism of $\Omega_n$ of the form
$$\Phi_\lambda(X)=(X-\lambda I)(I-\overline{\lambda}X)^{-1},$$
we may assume that $A_2=0.$ Having in mind Proposition 3 and the
equality $l_{\Om_n}(A_1,0)=r(A_1)$ (cf.~\cite{NTZ}), it is enough
to show that if $A_1\in\Omega_n$ and
$$r(A_1)=\inf\{|\alpha|:\exists \psi\in\O(\D,\G_n):\psi(0)=0,
\psi(\alpha)=\sigma(A_1),\psi_n'(0)=0\},$$ then $A_1$ has equal
eigenvalues. If the characteristic polynomial of $A_1$ does not
have the form $x^n+a,$ then this follows as in \cite[Proposition
10 (iii)]{NTZ} with an obvious modification in the proof of
\cite[Lemma 11]{NTZ}. Otherwise, it suffices to take
$\psi(\zeta)=(0,\dots,0,\zeta^2).$}
\end{remark*}

\section{The spectral Carath\'eodory-Fej\'er problem}

In the second part of this note we will treat the
\textit{spectral Carath\'eodory-Fej\'er problem} which has been also discussed in
\cite{HMY}.
The simplest case of this problem is the following one (SCFP):
\smallskip

\noindent\textit{Given $A\in\Om_n$ and $B\in\M_n,$ determine
whether there exists a $\psi\in\O(\D,\Om_n)$ such that $\psi(0)=A$
and $\psi'(0)=B.$}
\smallskip

When $A$ is cyclic, for any
$\phi\in\O(\D,G_n)$ with $\phi(0)=\sigma(A)$ and
$\phi'(0)=\sigma'(0)B$ ($\sigma'(A)$ is the Fr\'echet derivative
of $\sigma$ at $A$) there is a $\psi\in\O(\D,\Om_n)$ such that $\phi=\sigma\circ\psi,$ $\psi(0)=A$ and $\psi'(0)=B$ (see
\cite{HMY}).
 Denote by $\kappa_G$ the Kobayashi pseudometric of a domain $G$ in
$\Bbb C^m$ (cf.~\cite{NTZ}).
 Then $\kappa_{\Om_n}(A;B)=\kappa_{\mathbb G_n}(\sigma(A); \sigma'(A)B)$.

As in the case of the Lempert function (see Remark after the proof of Proposition \ref{3}), extrema are attained and
a $\psi$ as in (SFCP) exists if and only if $\kappa_{\Om_n}(A;B)\le 1.$

 Note that $\kappa_{\Om_n}(A;B)=0$ if and only if
$\sigma'(A)B=0,$  if and only if there is a $\psi\in\O(\C,\Om_n)$
with $\psi(0)=A_{\mu}$ and $\psi'(0)=B$ (see \cite{Nik-Tho1}).

The situation is more complicated if $A$ is non-cyclic.

\subsection{The case of scalar matrices}

Let first $A=\lambda I.$ Applying $\Phi_{\lambda},$ we assume that
$A=0.$ Since $\Om_n$ is a pseudoconvex balanced domain, (SCFP) is
solvable if and only if $r(B)=\kappa_{\Om_n}(0;B)\le 1.$ On the
other hand, we have the following.

\begin{proposition}\label{8} Let $B\in\Om_n$ be cyclic and let
$\phi\in\O(\D,\G_n)$  be such that $\phi(0)=0.$ Then there
exists a $\psi\in\O(\D,\Om_n)$ satisfying $\phi=\sigma\circ\psi,$
$\psi(0)=0$ and $\psi'(0)=B$ if and only if
$\ord_0\phi_j\ge j$ and
$\phi_j^{(j)}(0)/j!=\sigma_j(B)$ for any $j=1,\dots,n.$
\end{proposition}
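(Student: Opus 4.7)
The plan is to mirror the strategy of Proposition~\ref{3}: reduce $B$ to its rational canonical form by similarity, then construct $\psi$ via a companion-type ansatz whose entries are solvable explicitly in terms of the components of $\phi$.

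For the necessity direction I would use the fact that $\sigma_j$ is a polynomial of degree~$j$ in the matrix entries, homogeneous under scalars ($\sigma_j(tA)=t^j\sigma_j(A)$). Writing $\psi(\zeta)=\zeta B+O(\zeta^2)$ one immediately obtains
$$\phi_j(\zeta)=\sigma_j(\psi(\zeta))=\zeta^j\sigma_j(B)+O(\zeta^{j+1}),$$
which yields both $\ord_0\phi_j\ge j$ and $\phi_j^{(j)}(0)/j!=\sigma_j(B)$.

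For the sufficiency, the first observation is that conjugating $\psi$ by a constant invertible matrix neither changes $\phi=\sigma\circ\psi$ nor the vanishing $\psi(0)=0$, and conjugates $\psi'(0)$ accordingly; since also $\sigma_j(B)$ depends only on the similarity class of $B$, I may replace $B$ by its companion matrix
$$B=\begin{pmatrix}0&1&\cdots&0\\&\ddots&\ddots&\\0&&0&1\\ \alpha_1&\alpha_2&\cdots&\alpha_n\end{pmatrix},\qquad \alpha_k=(-1)^{n-k}\sigma_{n-k+1}(B),$$
and search for $\psi$ of the shape
$$\psi(\zeta)=\begin{pmatrix}0&\zeta&\cdots&0\\&\ddots&\ddots&\\0&&0&\zeta\\ f_1(\zeta)&f_2(\zeta)&\cdots&f_n(\zeta)\end{pmatrix},$$
with $f_k\in\O(\D)$ to be determined. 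A direct determinant expansion (or induction on $n$) gives
$$\sigma_k(\psi(\zeta))=(-1)^{k+1}\zeta^{k-1}f_{n-k+1}(\zeta),\qquad k=1,\dots,n,$$
so that the requirement $\phi=\sigma\circ\psi$ becomes exactly
$$f_k(\zeta)=(-1)^{n-k}\phi_{n-k+1}(\zeta)/\zeta^{n-k}.$$

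Now the hypothesis $\ord_0\phi_j\ge j$ is precisely what makes each such $f_k$ a well-defined element of $\O(\D)$ vanishing at $0$, so $\psi(0)=0$; and the hypothesis $\phi_j^{(j)}(0)/j!=\sigma_j(B)$ translates, under the same substitution, into $f_k'(0)=\alpha_k$, giving $\psi'(0)=B$. Finally $\sigma(\psi(\zeta))=\phi(\zeta)\in\G_n$ forces $r(\psi(\zeta))<1$, so $\psi\in\O(\D,\Om_n)$. The only delicate step is guessing the correct companion-type ansatz; once it is in place, everything else reduces to careful sign-bookkeeping in the characteristic polynomial, in close parallel with the proof of Proposition~\ref{3}.
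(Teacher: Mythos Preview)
Your proof is correct and follows essentially the same route as the paper: conjugate $B$ to its companion form, then take $\psi$ of companion type with $\zeta$ on the superdiagonal and bottom-row entries $\phi_j(\zeta)/\zeta^{j-1}$ (you keep track of the alternating signs $(-1)^{j+1}$ explicitly, while the paper suppresses them). The necessity argument via homogeneity of $\sigma_j$ is exactly what the paper calls ``obvious''.
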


Since $\sigma'(0)B=(\tr B,0,\dots,0),$ there are $n(n-1)/2$ additional
conditions for lifting.

\begin{proof} The necessary part is obvious. For the converse, let
$$\tilde B:=
\left(\begin{array}{ccccc}
0&1&\dots&0\\
\\
0&0&\dots&1\\
b_n&b_{n-1}&\dots&b_1
\end{array}\right).
$$
be the companion matrix of $B$ and let $B=P^{-1}\tilde BP.$
Setting $\tilde\phi_j(\zeta)=\phi_j(\zeta)/\zeta^{j-1}$ and
$$\tilde\psi=
\left(\begin{array}{ccccc}
0&\zeta&\dots&0\\
\\
0&0&\dots&\zeta\\
\tilde\phi_n&\tilde\phi_{n-1}&\dots&\tilde\phi_1
\end{array}\right),
$$
then the mapping $\psi=P^{-1}\tilde\psi P$ does the job.
\end{proof}

To complete the picture about the lifting property if $n=2,$ it remains
to show the following.

\begin{proposition}\label{9} Let $B=\lambda I\in\M_2$ and
$\phi\in\O(\D,\G_2)$ be such that $\phi(0)=0.$ Then there
exists a $\psi\in\O(\D,\Om_2)$ satisfying $\phi=\sigma\circ\psi,$
$\psi(0)=0$ and $\psi'(0)=B$ if and only if $\phi_1'(0)=2\lambda,$
$\phi_2'(0)=0,$ $\phi_2''(0)=2\lambda^2,$ and $\phi_2'''(0)=3\lambda\phi_1''(0).$
\end{proposition}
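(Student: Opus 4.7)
The necessary direction is a direct Taylor expansion. Writing $\psi=(\psi_{pq})_{p,q=1,2}$ with $\psi_{pq}(0)=0$, $\psi_{11}'(0)=\psi_{22}'(0)=\lambda$ and $\psi_{12}'(0)=\psi_{21}'(0)=0$, I would substitute into $\phi_1=\psi_{11}+\psi_{22}$ and $\phi_2=\psi_{11}\psi_{22}-\psi_{12}\psi_{21}$ and read off $\phi_1'(0)$, $\phi_2'(0)$, $\phi_2''(0)$, $\phi_2'''(0)$. The first three derivatives of $\phi_2$ at $0$ receive contributions only from $\psi_{11}\psi_{22}$, since $\psi_{12}$ and $\psi_{21}$ each vanish to order at least $2$ at $0$, so $(\psi_{12}\psi_{21})^{(k)}(0)=0$ for $k\le 3$; the four identities then drop out.

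For the converse I would mimic the proof of Proposition \ref{1} and search for $\psi$ of the form
$$
\psi=\begin{pmatrix} P & Q \\ R & \phi_1-P \end{pmatrix},
$$
which automatically gives $\tr\psi=\phi_1$, while $\det\psi=\phi_2$ becomes the single algebraic constraint $QR=g$, where $g:=P\phi_1-P^2-\phi_2$. The initial conditions $\psi(0)=0$ and $\psi'(0)=\lambda I$ translate to $P(0)=0$, $P'(0)=\lambda$, $Q(0)=Q'(0)=0$, $R(0)=R'(0)=0$. I would then try the simplest admissible ansatz,
$$
P(\zeta)=\lambda\zeta,\qquad Q(\zeta)=\zeta^2,\qquad R(\zeta)=g(\zeta)/\zeta^2,
$$
and the key verification is that $g$ vanishes at $0$ to order at least $4$, so that $R$ is holomorphic on $\D$ with $R(0)=R'(0)=0$. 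A direct expansion gives
\begin{align*}
g(0) &= g'(0) = 0,\\
g''(0) &= 2\lambda\,\phi_1'(0)-2\lambda^2-\phi_2''(0)=4\lambda^2-2\lambda^2-2\lambda^2=0,\\
g'''(0) &= 3\lambda\,\phi_1''(0)-\phi_2'''(0)=0,
\end{align*}
the four stated hypotheses entering one at a time.

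It remains to check $\psi(\D)\subset\Om_2$. Since $\tr\psi(\zeta)=\phi_1(\zeta)$ and $\det\psi(\zeta)=\phi_2(\zeta)$, the eigenvalues of $\psi(\zeta)$ are the two roots of $t^2-\phi_1(\zeta)\,t+\phi_2(\zeta)$, and hence form $\sigma(\psi(\zeta))=\phi(\zeta)\in\G_2$; by definition of $\G_2$ they lie in $\D$, so $r(\psi(\zeta))<1$. The main (small) obstacle I anticipate is the choice of ansatz: the four hypotheses on $\phi$ are precisely what is needed to push the vanishing of $g$ at the origin from the obvious order $2$ up to order $4$, which matches the double zero built into $Q(\zeta)=\zeta^2$ and leaves $R$ with the double zero required for $\psi'(0)=\lambda I$.
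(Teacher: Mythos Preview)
Your argument is correct and is essentially identical to the paper's own proof: with $P(\zeta)=\lambda\zeta$, $Q(\zeta)=\zeta^2$ and $R=g/\zeta^2$, your matrix coincides exactly with the paper's lift $\tilde\psi=\begin{pmatrix}\lambda\zeta&\zeta^2\\f_{21}&\phi_1-\lambda\zeta\end{pmatrix}$, where $f_{21}=\lambda(\phi_1-\lambda\zeta)/\zeta-\phi_2/\zeta^2$ is precisely your $R$. The paper omits the explicit Taylor verification that $g$ vanishes to order $4$ and the check that $\psi(\D)\subset\Om_2$, which you supply.
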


So we need more additional conditions for the lifting. As one may expect (see also below)
these conditions depend on the structure of the rational canonical forms of both $A$ and $B.$

\begin{proof} The necessary part is clear. For the converse, set $f_{22}=\phi_1-\lambda\zeta$
and $f_{21}=\lambda f_{22}/\zeta-\phi_2/\zeta^2.$ Then the mapping
$$\tilde\psi=\left(\begin{array}{ccc}\lambda\zeta&\zeta^2\\f_{21}&f_{22}\end{array}\right)$$
is well-defined and fulfills the required properties.
\end{proof}

In the case of $n=3$ and $B=\lambda I$ we have the following result.

\begin{proposition}\label{10}
Let $B=\lambda I\in\M_3$ ($\lambda\neq 0$) and
$\phi\in\O(\D,\G_3)$ with $\phi(0)=0$. Then there exists a
$\psi\in\O(\D,\Om_3)$ with $\phi=\sigma\circ\psi$, $\psi(0)=0$ and
$\psi'(0)=B$ if and only if $\phi_1'(0)=3\lambda$, $\phi_2'(0)=0$,
$\phi_2''(0)/2=3\lambda^2$, $\phi_3'(0)=\phi_3''(0)=0$,
$\phi_3'''(0)/3!=\lambda^3$, $\phi_2'''(0)/3!=\lambda\phi_1''(0)$,
$\phi_3^{(4)}(0)/4!=\lambda^2\phi_1''(0)/2,$ and
$\phi_3^{(5)}(0)/5!-\lambda\phi_2^{(4)}/4!+\lambda^2\phi_1'''(0)/3!=0.$
\end{proposition}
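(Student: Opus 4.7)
The plan is to adapt the strategy used in Propositions~\ref{8} and~\ref{9}: verify necessity by Taylor-expanding $\phi=\sigma\circ\psi$ at $0$, and establish sufficiency by producing $\psi$ explicitly as a $3\times 3$ matrix whose entries are rational functions of $\phi_1,\phi_2,\phi_3,\zeta$.

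For the necessary part, one writes $\psi(\zeta)=\lambda\zeta I+\tfrac{1}{2}\zeta^2C_2+\tfrac{1}{6}\zeta^3C_3+\cdots$ and uses the Newton-type identities $\phi_1=\tr\psi$, $2\phi_2=(\tr\psi)^2-\tr(\psi^2)$, $\phi_3=\det\psi$. Matching Taylor coefficients through order $\zeta^5$ produces all eight identities: the six of lowest order are immediate consequences of the scalar shape of $\psi'(0)=\lambda I$, while the last two (involving $\phi_3^{(4)}$ and the $\lambda,\lambda^2$-weighted combination of $\phi_1''',\phi_2^{(4)},\phi_3^{(5)}$) come out of the $\zeta^4,\zeta^5$ coefficients of $\det\psi$ once the lower-order contributions have canceled.

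For the converse, mimicking the $n=2$ ansatz of Proposition~\ref{9}, I propose the explicit ``almost-companion'' form
$$
\psi(\zeta)=\begin{pmatrix}\lambda\zeta & \zeta^2 & 0\\ 0 & \lambda\zeta & \zeta^2\\ f_{31} & f_{32} & f_{33}\end{pmatrix}.
$$
Expanding $\det(tE-\psi)$ and matching with $t^3-\phi_1t^2+\phi_2t-\phi_3$ yields a triangular system, to be solved successively by
$$
f_{33}:=\phi_1-2\lambda\zeta,\quad f_{32}:=\frac{2\lambda\phi_1}{\zeta}-3\lambda^2-\frac{\phi_2}{\zeta^2},\quad f_{31}:=\frac{\phi_3-\lambda^2\zeta^2f_{33}+\lambda\zeta^3f_{32}}{\zeta^4}.
$$
By construction, whenever $f_{31},f_{32},f_{33}\in\O(\D)$ the resulting $\psi$ satisfies $\sigma\circ\psi=\phi$, and the first two rows together with the $(3,3)$ entry already deliver the correct values and first derivatives at $0$. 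It remains only to verify that $f_{32}$ and $f_{31}$ are holomorphic at the origin and vanish there together with their first derivatives.

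The main---and essentially the only nontrivial---step is this order-of-vanishing check. The identities $\phi_2'(0)=0$, $\phi_1'(0)=3\lambda$, $\phi_2''(0)/2=3\lambda^2$, and $\phi_2'''(0)/3!=\lambda\phi_1''(0)$ translate directly into holomorphy of $f_{32}$ at $0$ together with $\ord_0 f_{32}\ge 2$. The four remaining identities $\phi_3'(0)=\phi_3''(0)=0$, $\phi_3'''(0)/3!=\lambda^3$, $\phi_3^{(4)}(0)/4!=\lambda^2\phi_1''(0)/2$, and $\phi_3^{(5)}(0)/5!-\lambda\phi_2^{(4)}(0)/4!+\lambda^2\phi_1'''(0)/3!=0$ are exactly what is needed for the numerator defining $f_{31}$ to vanish to order at least $6$ at $\zeta=0$, which is equivalent to $f_{31}\in\O(\D)$ with $f_{31}(0)=f_{31}'(0)=0$. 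Once the ansatz is in place, each condition in the statement corresponds to a single order of vanishing, and the proof reduces to pure combinatorial bookkeeping.
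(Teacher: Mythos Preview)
Your argument is correct and is essentially the same as the paper's: the paper uses the identical ansatz with $f_{11}=f_{22}=\lambda\zeta$, $f_{12}=f_{23}=\zeta^2$, $f_{33}=\phi_1-2\lambda\zeta$, and then defines $f_{32}$ and $f_{31}$ by exactly the formulas you wrote (expressed there as $f_{32}=(f_{11}f_{22}+f_{22}f_{33}+f_{33}f_{11}-\phi_2)/\zeta^2$ and $f_{31}=(\phi_3-f_{11}f_{22}f_{33}+f_{11}f_{23}f_{32})/\zeta^4$, which simplify to yours). The verification of holomorphy and of the required vanishing at $0$ is the same order-counting you describe.
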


We should mention that a similar result is true for arbitrary $n$ and $B=\lambda I\in\M_n$.

\begin{proof} Straightforward but tedious calculations lead to the necessary conditions
(expand $\psi_{ii}$ up to order 3 and $\psi_{ij}$ ($i\neq j$) up
to order 2). So it remains to prove the converse statement. Let
$\phi$ be given as in the proposition. We are looking for the
following mapping $\psi$ as a lifting of $\phi$:
$$
\psi=\left(\begin{array}{ccc}f_{11}&f_{12}&0\\
0&f_{22}&f_{23}\\
f_{31}&f_{32}&f_{33}
\end{array}\right),
$$
where $f_{11}(\zeta)=f_{22}(\zeta):=\lambda\zeta$,
$f_{12}(\zeta)=f_{23}(\zeta):=\zeta^2$, and
$f_{33}:=\phi_1-f_{11}-f_{22}$. Moreover, put
$$f_{32}=\frac{f_{11}f_{22}+f_{22}f_{33}+f_{33}f_{11}-\phi_2}{\zeta^2},$$
$$f_{31}=\frac{\phi_3-f_{11}f_{22}f_{33}+f_{11}f_{23}f_{32}}{\zeta^4}.$$
Using the conditions on $\phi$ it turns out that
$\psi\in\O(\D,\Om_3)$ is a well-defined mapping satisfying all
desired conditions.
\end{proof}

To finish with the lifting property if $n=3$ and $A=0,$ it remains to
consider the case, when $B$ is a non-cyclic and non-scalar matrix.

\begin{proposition}\label{11} Let $B\in\Om_3$ be a non-cyclic and non-scalar matrix
such that $\sp(B)=\{\lambda,\lambda,\mu\}.$ Let
$\phi\in\O(\D,\G_3)$  be such that $\phi(0)=\sigma(0).$ Then there
exists a $\psi\in\O(\D,\Om_3)$ satisfying $\phi=\sigma\circ\psi,$
$\psi(0)=0$ and $\psi'(0)=B$ if and only if
$\phi_1'(0)=2\lambda+\mu$, $\phi_2'(0)=0$,
$\phi_2''(0)/2=\lambda^2+2\lambda\mu$, $\phi_3'(0)=\phi_3''(0)=0$,
$\phi_3'''(0)/3!=\lambda^2\mu$, and $\phi_3^{(4)}(0)/4!-\lambda
\phi_2'''(0)/3!+\lambda^2\phi_1''(0)/2=0.$
\end{proposition}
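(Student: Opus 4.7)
The plan follows the pattern of Propositions \ref{1}, \ref{3}, and especially \ref{10}. For necessity, I would Taylor expand $\phi_j=\sigma_j\circ\psi$ at $0$ using Newton's identities. Writing $\psi(\zeta)=B\zeta+\psi_2\zeta^2+\psi_3\zeta^3+\cdots$, the vanishing of $\phi_j^{(i)}(0)$ for $i<j$ and the identities $\phi_j^{(j)}(0)/j!=\sigma_j(B)$ are automatic from the homogeneity of $\sigma_j$; with $\sigma(B)=(2\lambda+\mu,\lambda^2+2\lambda\mu,\lambda^2\mu)$ this yields the first five conditions. The distinguishing feature of a non-cyclic, non-scalar $B$ with spectrum $\{\lambda,\lambda,\mu\}$ is that its minimal polynomial is $(x-\lambda)(x-\mu)$, so $B^2=(\lambda+\mu)B-\lambda\mu I$. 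Writing $\phi_3^{(4)}(0)/4!$ as a linear combination of $\tr(\psi_2),\tr(B\psi_2),\tr(B^2\psi_2)$ via Newton, using this Cayley--Hamilton-type relation to eliminate $\tr(B^2\psi_2)$, and then substituting $\tr(\psi_2)=\phi_1''(0)/2$ and $\tr(B\psi_2)=(2\lambda+\mu)\phi_1''(0)/2-\phi_2'''(0)/3!$ (read off from the $\zeta^3$-expansion of $\sigma_2(\psi)$) yields precisely the final identity.

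For sufficiency, I would replace $B$ by its rational canonical form
$$
B=\left(\begin{array}{ccc}\lambda & 0 & 0 \\ 0 & 0 & 1 \\ 0 & -\lambda\mu & \lambda+\mu\end{array}\right),
$$
recovering the original $B$ at the end by an entire conjugation as in the proof of Proposition \ref{1}. I then seek $\psi$ of the same block shape used in Proposition \ref{3}, adapted to the single base point $0$:
$$
\psi=\left(\begin{array}{ccc}\lambda\zeta & \zeta^2 & 0 \\ 0 & 0 & \zeta \\ f_{31} & f_{32} & \phi_1-\lambda\zeta\end{array}\right).
$$
Imposing $\sigma_2(\psi)=\phi_2$ forces $f_{32}=\tilde g/\zeta$ with $\tilde g:=\lambda\zeta(\phi_1-\lambda\zeta)-\phi_2$; the listed second-order conditions on $\phi_2$ give $\tilde g=-\lambda\mu\zeta^2+O(\zeta^3)$, so $f_{32}$ is holomorphic on $\D$ with $f_{32}(0)=0$ and $f_{32}'(0)=-\lambda\mu$, matching the $(3,2)$-entry of $B$. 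Then $\sigma_3(\psi)=\phi_3$ forces $f_{31}=\tilde h/\zeta^3$ with $\tilde h:=\phi_3+\lambda\zeta^2f_{32}$.

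The crucial check is that $\tilde h=O(\zeta^5)$, so that $f_{31}$ is holomorphic on $\D$ with $f_{31}(0)=f_{31}'(0)=0$, matching the zero $(3,1)$-entry of $B$. The $\zeta^k$-coefficients of $\tilde h$ for $k\le 2$ vanish from $\phi_3(0)=\phi_3'(0)=\phi_3''(0)=0$; the $\zeta^3$-coefficient equals $\phi_3'''(0)/3!-\lambda^2\mu=0$ by the fifth condition; and feeding the $\zeta^2$-coefficient of $f_{32}$ (namely $\lambda\phi_1''(0)/2-\phi_2'''(0)/3!$) into $\lambda\zeta^2f_{32}$, the $\zeta^4$-coefficient of $\tilde h$ comes out to be exactly $\phi_3^{(4)}(0)/4!-\lambda\phi_2'''(0)/3!+\lambda^2\phi_1''(0)/2$, which vanishes by the last condition. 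The other entries of $\psi'(0)$ manifestly agree with $B$, and $\sigma\circ\psi=\phi$ and $\psi(0)=0$ hold by construction. The main obstacle is purely bookkeeping: carrying out the necessity computation cleanly and verifying the $\zeta^4$-identity above. Conceptually the proof reduces to the observation that the single relation $(B-\lambda I)(B-\mu I)=0$ imposes exactly one extra compatibility condition on $\phi$ beyond the standard $\sigma_j(B)$ matchings, and the block-triangular ansatz is tailored to absorb it.
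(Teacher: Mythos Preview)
Your proof is correct and follows essentially the same strategy as the paper: reduce $B$ to a canonical form by a constant conjugation, then build $\psi$ as a block-triangular matrix with explicit monomials in the $(1,2)$ and $(2,3)$ slots, solve $\sigma_2(\psi)=\phi_2$ for $f_{32}$ and $\sigma_3(\psi)=\phi_3$ for $f_{31}$, and verify that the stated conditions give exactly the vanishing orders needed. The only cosmetic difference is that the paper takes $B=\diag(\lambda,\lambda,\mu)+E_{23}$ and hence $f_{11}=f_{22}=\lambda\zeta$, whereas you take the rational canonical form with a $2\times 2$ companion block and $f_{22}=0$; this shifts the formulas for $f_{32},f_{31}$ slightly but the verification is the same. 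Your necessity argument via Newton's identities together with the minimal-polynomial relation $B^2=(\lambda+\mu)B-\lambda\mu I$ is a tidy, coordinate-free way to obtain the last condition, while the paper extracts it by directly expanding $\phi_2'''(0)$ and $\phi_3^{(4)}(0)$ in the chosen coordinates.
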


\begin{proof} Applying an automorphism of $\Omega_3$ of the form $X\to
P^{-1}XP,$ we may assume that
$B=\left(\begin{array}{ccc}\lambda&0&0\\0&\lambda&1
\\0&0&\mu\end{array}\right).$
The necessary part is almost trivial (use that $\phi_2'''(0)/3=
(\lambda+\mu)\phi_1''(0)+(\lambda-\mu)\psi_{33}''(0)-\psi_{32}''(0)$
and
$\phi_3^{(4)}(0)/12=\lambda\big(\mu\phi_1''(0)+(\lambda-\mu)\psi_{33}''(0)-\psi_{32}''(0)\big)$).
For the converse, set $f_{11}=f_{22}=\lambda\zeta$, $f_{33}:=\phi_1-f_{11}-f_{22}$,
$f_{12}=\zeta^2,$ $f_{23}=\zeta,$
$f_{32}=2\lambda\phi_1-3\lambda^2\zeta-\phi_2/\zeta,$ and
$$f_{31}=\frac{\phi_3-\lambda\zeta(\phi_2-\lambda\zeta\phi_1+\lambda^2\zeta^2)}{\zeta^3}.$$
Then the mapping $$\psi=\left(\begin{array}{ccc}f_{11}&f_{12}&0\\0&f_{22}&f_{23}\\
f_{31}&f_{32}&f_{33}\end{array}\right)$$ is well-defined and has
the required properties.
\end{proof}

Note that  $\kappa_{\Bbb G_n}(0;\sigma'(0)B)=|\tr B|/n$, while
$\kappa_{\Omega_n}(0; B)= r(B)$, and these two quantities only match when
all eigenvalues of $B$ coincide.  When that is not the case, considering a
sequence $A_j \to 0$ of cyclic matrices, for which $\kappa_{\Bbb
G_n}(\sigma(A_j);\sigma'(A_j)B)=\kappa_{\Omega_n}(A_j; B)$,
shows that (SCFP) is not stable
in the first variable. An example in \cite{Nik-Tho1} shows that (SCFP)
is also not stable in the second variable for $n=3$ (see also below).

\subsection{The case of non-cyclic and non-scalar matrices}

To complete the picture about the lifting property if $n=3,$ it
remains to settle the case, when $A\in\Omega_3$ is a non-cyclic
and non-scalar matrix. Applying automorphisms of $\Omega_3$ of the
forms $\Phi_\lambda$ and $X\to P^{-1}XP,$ we may assume that
$A=A_\mu:=\left(\begin{array}{ccc}0&0&0\\0&0&1\\0&0&\mu\end{array}\right).$

\begin{proposition}\label{5} Let $B\in\Om_3$ with $b_{12}\neq 0,$ or $b_{31}\neq\mu
b_{21},$ or $b_{32}\neq\mu(b_{22}-b_{11}).$ Let
$\phi\in\O(\D,\G_3)$ be such that $\phi(0)=\sigma(A_\mu)$ and
$\phi'(0)=\sigma'(A_\mu)B.$ Then there exists a
$\psi\in\O(\D,\Om_3)$ satisfying $\phi=\sigma\circ\psi,$
$\psi(0)=A_\mu$ and $\psi'(0)=B$ if and only if
$$\frac{\phi''_3(0)}{2}=\mu\left|\begin{array}{cc}b_{11}&b_{12}\\b_{21}&b_{22}\end{array}\right|-
\left|\begin{array}{cc}b_{11}&b_{12}\\b_{31}&b_{32}\end{array}\right|.$$
\end{proposition}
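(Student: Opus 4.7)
The plan is to verify necessity by Taylor-expanding $\det\psi$ and to establish sufficiency by an explicit construction together with a case analysis on which of the three alternative hypotheses on $B$ holds.

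For necessity, write $\psi(\zeta)=A_\mu+\zeta B+O(\zeta^2)$ and expand $\phi_3=\det\psi$ as a power series. Since $A_\mu$ has rank $1$, $\operatorname{adj}(A_\mu)=0$, so $\phi_3(0)=\det A_\mu=0$ and $\phi_3'(0)=\tr(\operatorname{adj}(A_\mu) B)=0$; the $\zeta^2$-coefficient then reduces to the pure quadratic-in-$B$, linear-in-$A_\mu$ part of $\det(A+X)$. Enumerating permutations $\pi\in S_3$ and pairs $i<j$, only terms with $\pi(3)=3$ or $\pi(2)=3$ contribute (since the first two columns of $A_\mu$ vanish), and this gives exactly the stated formula for $\phi_3''(0)/2$.

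For sufficiency, I would first normalize $B$. Replacing $\psi$ by $P^{-1}\psi P$ with $P(\zeta)=I+\zeta Q+\cdots$ preserves both $\psi(0)=A_\mu$ and $\sigma\circ\psi=\phi$, while changing $B$ to $B+[A_\mu,Q]$. A direct computation of $[A_\mu,Q]$ shows that $b_{12}$, $b_{31}-\mu b_{21}$, and $b_{32}-\mu(b_{22}-b_{11})$ are invariants of this shift, and that $b_{13}$, $b_{21}$, $b_{22}$, $b_{23}$ can be prescribed freely by a suitable choice of $Q$; the right-hand side of the $\phi_3''(0)/2$ identity is likewise invariant. So I may assume
$$B=\begin{pmatrix} b_{11} & b_{12} & 0 \\ 0 & 0 & 0 \\ b_{31} & b_{32} & b_{33}\end{pmatrix},$$
with $b_{12}\neq 0$, or $b_{31}\neq 0$, or $b_{32}+\mu b_{11}\neq 0$. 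I then try the ansatz
$$\psi=\begin{pmatrix} f_{11} & f_{12} & 0 \\ 0 & f_{22} & 1 \\ f_{31} & f_{32} & f_{33}\end{pmatrix},$$
for which the equations $\sigma_j(\psi)=\phi_j$ yield $f_{33}=\phi_1-f_{11}-f_{22}$, $f_{32}=f_{11}(f_{22}+f_{33})+f_{22}f_{33}-\phi_2$, and $f_{12}f_{31}=g:=\phi_3-f_{11}f_{22}f_{33}+f_{11}f_{32}$, with $g(0)=g'(0)=0$ automatic.

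Now split into cases. If $b_{12}\neq 0$, choose $f_{11}(\zeta)=b_{11}\zeta$, $f_{22}\equiv 0$, $f_{12}(\zeta)=b_{12}\zeta$, and $f_{31}=g/f_{12}$; then the identity $\phi_3''(0)/2=b_{12}b_{31}-b_{11}b_{32}$ translates exactly to $f_{31}'(0)=b_{31}$. If $b_{12}=0$ and $b_{31}\neq 0$, swap the roles by setting $f_{31}(\zeta)=b_{31}\zeta$ and $f_{12}=g/f_{31}$. The main obstacle is the remaining case $b_{12}=b_{31}=0$, $b_{32}+\mu b_{11}\neq 0$: here both $f_{12}$ and $f_{31}$ must vanish to order $\geq 2$, so I take $f_{12}(\zeta)=\zeta^2$, leave $f_{11}''(0)$ as a free parameter, and set $f_{31}=g/\zeta^2$. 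The hypothesis on $\phi_3''(0)$ gives $f_{31}(0)=0$, while the requirement $f_{31}'(0)=0\,(=b_{31})$ becomes a linear equation in $f_{11}''(0)$ whose coefficient is a nonzero multiple of $b_{32}+\mu b_{11}$; this is precisely where the third alternative in the non-degeneracy hypothesis is needed.
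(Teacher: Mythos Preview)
Your argument is correct and follows essentially the same route as the paper: the necessity is a direct Taylor computation, and for sufficiency you normalize $B$ via conjugation by a holomorphic invertible $P(\zeta)$ with $P(0)=I$, $P'(0)=Q$ (the paper uses $e^{\zeta X}$, which is the natural concrete choice making invertibility on all of $\D$ automatic), then use the same triangular ansatz and the same three-case split, solving a linear equation in $f_{11}''(0)$ in the last case with coefficient $b_{32}+\mu b_{11}$. The only cosmetic difference is that the paper takes $f_{23}=e^{-\zeta^2}$ where you take $f_{23}=1$; both satisfy $f_{23}(0)=1$, $f_{23}'(0)=0$ and are nowhere zero, so this changes nothing.
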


Note that the right-hand side of the last equality is the
second-order G\^ateaux derivative $\sigma_3''(A;B).$

\begin{proof} The necessary part follows by straightforward calculations.

For the converse, replacing the wanted $\psi$ by
$\tilde\psi=e^{-\zeta X}\psi e^{\zeta X},$  we obtain
$\tilde B:=\tilde\psi'(0)=B+[A_\mu,X].$  Choosing an appropriate $X,$ we may
assume that
$$\tilde B=\left(\begin{array}{ccc}\tilde b_{11}&\tilde b_{12}&0\\0&0&0\\ \tilde b_{31}& \tilde b_{32}& \tilde b_{
33}\end{array}\right),$$ where $\tilde b_{1j}=b_{1j}$, $j=1, 2$,
$\tilde b_{3j}=b_{3j}-\mu b_{2j}$, $j=1, 2$, and $\tilde b_{33}=b_{33}+b_{22}$.   From now
on we write $b_{ij}$ for $\tilde b_{ij}$.
Then $\phi(0)=(\mu,0,0),$ $\phi'(0)=(b_{11}+b_{33},\mu
b_{11}-b_{32},0)$ and $\phi''_3(0)/2= b_{31}b_{12}-b_{11}b_{32}.$

Let, for example, $b_{12}\neq 0.$ Set $f_{11}=b_{11}\zeta,$
$f_{12}=b_{12}\zeta,$ $f_{33}=\varphi_1-f_{11}$ and
$f_{32}=e^{\zeta^2}(f_{11}f_{33}-\phi_2).$ It follows that
$f_{31}=\frac{f_{11}f_{32}+e^{\zeta^2}\phi_3}{f_{12}}$ is a
well-defined function with $f_{31}(0)=0$ and $f_{31}'(0)=b_{31}.$
Then the mapping
$$\psi=\left(\begin{array}{ccc}f_{11}&f_{12}&0\\0&0&e^{-\zeta^2}\\f_{31}
&f_{32}&f_{33}\end{array}\right)$$ does the job. The case
$b_{31}\neq 0$ is similar.

Let now $b_{12}=b_{31}=0$ but $b_{32}\neq-\mu b_{11}.$ The same
mapping with $f_{11}=b_{11}\zeta+c\zeta^2$ and $f_{12}=\zeta^2$
has the desired properties, where
$$c=\frac{1}{b_{32}+\mu b_{11}}\left(b_{11}\left(\frac{\phi_2''(0)}{2}-b_{11}b_{33}\right)
-\frac{\phi_3'''(0)}{3!}\right).$$
\end{proof}

\begin{proposition}\label{6} Let $B\in\Om_3$ with $b_{12}=0,$ $b_{31}=\mu
b_{21}$ and $b_{32}=\mu(b_{22}-b_{11}).$ Let $\phi\in\O(\D,\G_3)$
be such that $\phi(0)=\sigma(A_\mu)$ and
$\phi'(0)=\sigma'(A_\mu)B.$ Then there exists a
$\psi\in\O(\D,\Om_3)$ satisfying $\phi=\sigma\circ\psi,$
$\psi(0)=A_\mu$ and $\psi'(0)=B$ if and only if $\phi_3''(0)/2=\mu
b_{11}^2$ and $\phi_3'''(0)/3!=b_{11}(\phi_2''(0)/2-b_{11}b_{33})$
\end{proposition}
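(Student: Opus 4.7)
For necessity, I would expand $\sigma_3\circ\psi$ as a Taylor series at $\zeta=0$ up to order three. The $\zeta^2$-coefficient already gives the expression from Proposition~\ref{5}, which under the hypothesis $b_{12}=0$, $b_{31}=\mu b_{21}$, $b_{32}=\mu(b_{22}-b_{11})$ collapses to $\mu b_{11}^2$. The $\zeta^3$-coefficient a priori involves entries of $\psi''(0)$, but these can be eliminated using $\sigma_2\circ\psi=\phi_2$ at order two; after this elimination and under the same hypothesis, the second stated identity drops out.

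For sufficiency, I would run the same strategy as in the proof of Proposition~\ref{5}. First, replace the sought $\psi$ by $\tilde\psi:=e^{-\zeta X}\psi e^{\zeta X}$ for a suitable constant $X\in\M_3$ to reduce to the case in which the second row and the $(1,3)$-entry of $B$ vanish; under the present hypothesis the remaining entries then force $b_{12}=b_{31}=0$ and $b_{32}=-\mu b_{11}$, so all three ``free'' entries exploited by Proposition~\ref{5} now vanish. Next, take the same block ansatz
$$
\psi=\begin{pmatrix} f_{11} & f_{12} & 0 \\ 0 & 0 & e^{-\zeta^2} \\ f_{31} & f_{32} & f_{33}\end{pmatrix}
$$
as in Proposition~\ref{5}, but with $f_{12}:=\zeta^2$ in place of the now-vanishing $b_{12}\zeta$, keeping $f_{11}:=b_{11}\zeta$, $f_{33}:=\phi_1-f_{11}$, $f_{32}:=e^{\zeta^2}(f_{11}f_{33}-\phi_2)$, and $f_{31}:=(f_{11}f_{32}+e^{\zeta^2}\phi_3)/f_{12}$. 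The identities $\sigma_j\circ\psi=\phi_j$ follow from the same computation as in the proof of Proposition~\ref{5}, and consequently $\psi$ takes values in $\Om_3$ because $\phi$ takes values in $\G_3$.

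The only new point is to verify that $f_{31}$ is holomorphic with $f_{31}(0)=f_{31}'(0)=0$. Since $f_{12}$ now has a double zero at the origin, this requires the numerator $N:=f_{11}f_{32}+e^{\zeta^2}\phi_3$ to vanish to order at least four at $0$. The coefficients $[\zeta^0]N$ and $[\zeta^1]N$ vanish automatically from $\phi_2(0)=\phi_3(0)=\phi_3'(0)=0$; a short expansion, using $\phi_2'(0)=2\mu b_{11}$ (which itself follows from the normalized $b_{32}=-\mu b_{11}$), shows that the vanishing of $[\zeta^2]N$ and of $[\zeta^3]N$ is precisely equivalent to the two hypotheses of the proposition. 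Matching $\psi(0)$ with $A_\mu$ and $\psi'(0)$ with $B$ is then a routine entry-by-entry check. The main (conceptual) obstacle is this forced change of ansatz: when $b_{12}=0$ the original denominator $b_{12}\zeta$ of $f_{31}$ in the Proposition~\ref{5} formula vanishes, so $f_{12}$ has to be promoted to order $\zeta^2$, which demands one extra order of vanishing of $N$ --- and this is exactly what the new condition on $\phi_3'''(0)$ provides.
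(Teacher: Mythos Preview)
Your proposal is correct and follows essentially the same route as the paper's own proof: reduce $B$ via the conjugation $e^{-\zeta X}\psi e^{\zeta X}$ to the form with only $b_{11},b_{32},b_{33}$ nonzero, then reuse the ansatz from the first case of Proposition~\ref{5} with $f_{12}$ promoted from $b_{12}\zeta$ to $\zeta^2$. Your write-up is in fact more detailed than the paper's, which simply states that ``the same mapping as in the first part of the proof of Proposition~\ref{5} with $f_{12}=\zeta^2$ does the job''; your explicit check that the numerator $N$ vanishes to order four, and the identification of $[\zeta^2]N=0$ and $[\zeta^3]N=0$ with the two stated conditions, is exactly the content hidden behind that sentence.
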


\begin{proof} The necessary part is straightforward. For the converse, we may assume
that $b_{ij}=0$ for $(i,j)\neq(1,1),(3,2),(3,3).$ Then the same
mapping as in the first part of the proof of Proposition \ref{5}
with $f_{12}=\zeta^2$ does the job.
\end{proof}

We should mention that all the above conditions may be expressed
in terms of the original matrices $A$ and $B$. Since these
calculations look awful we decided not to include them.

\begin{remark*}{\rm We see that (SCFP) with data $(A_0,B)$ is not necessarily
stable near $B.$ To confirm this, note that
$\kappa_{\Om_3}(A_0;B)=0$ if and only if $B\in\mathcal
B_1\setminus\mathcal B_2,$ where
$$\begin{aligned} \mathcal B_1:&=\{B\in\M_3:\tr
B=b_{32}=b_{12}b_{31}=0\}\\
&=\{B\in\M_3:\sigma'(A_0)B=0,\sigma''_3(A_0;B)=0\}
\end{aligned}$$
 and $\mathcal
B_2:=\{B\in\M_3:b_{11}\neq 0,b_{12}=b_{31}=0\}$ (see
\cite{Nik-Tho1}). Moreover, if $B\in\mathcal B_1\setminus\mathcal
B_2,$ then there exists a $\psi\in\O(\C,\Om_3)$ with $\psi(0)=A_0$
and $\psi'(0)=B$ (see \cite{Nik-Tho1} or replace $\phi$ by  the constant map
$\phi(\zeta)=0$ in the construction of $\psi$ in Proposition \ref{5}
and \ref{6}).
On the other hand, it follows by the proof of
\cite[Example 2]{Nik-Pfl} that if $B\in\mathcal B_1\cap\mathcal
B_2,$ then there is a $\psi\in\O(\D,\Om_3)$ with $\psi(0)=A_0$ and
$\psi'(0)=B$ if and only if $\kappa_{\Om_3}(A_0;B)=|b_{11}|\le 1.$
This phenomenon agrees with Propositions \ref{5} and \ref{6}.

We have no such an effect if $\mu\neq 0.$ Then
$\kappa_{\Om_3}(A_\mu;B)=0$ if and only if $\sigma'(A_\mu)B=0$ and
$\sigma''_3(A_\mu;B)=0,$ if and only if there is a
$\psi\in\O(\C,\Om_3)$ with $\psi(0)=A_{\mu}$ and $\psi'(0)=B$ (see
\cite{Nik-Tho1}, where $A_\mu$ is replaced by the similar matrix
$\diag(0,0,\mu),$ or replace $\varphi$ by
the constant map $\phi(\zeta)=(\mu,0,0)$ as above).
This also agree with Propositions \ref{5} and \ref{6} (since if $\phi_3''(0)=0,$ then
$\phi_3'''(0)=0$).}
\end{remark*}

\end{document}